\title{On a curious integer sequence}
\author{\sc Bakir FARHI \\
Laboratoire de Mathématiques appliquées \\
Faculté des Sciences Exactes \\
Université de Bejaia, 06000 Bejaia, Algeria \\[1mm]
\href{mailto:bakir.farhi@gmail.com}{bakir.farhi@gmail.com} \\[1mm]
\url{http://farhi.bakir.free.fr/}
}
\date{}
\def\N{{\mathbb N}}
\def\Z{{\mathbb Z}}
\def\sgn{\mathrm{sgn}} 
\def\EMdash{\leavevmode\hbox to 10.6mm{\vrule height .63ex depth -.59ex
    width 10mm\hfill}}
\theoremstyle{plain}
\numberwithin{equation}{section}
\newtheorem{thm}{Theorem}[section]
\newtheorem{prop}[thm]{Proposition}
\newtheorem{coll}[thm]{Corollary}
\newtheorem{propn}{Proposition} 
\theoremstyle{definition}
\theoremstyle{remark}
\begin{document}
\maketitle

\begin{abstract}
This note is devoted to study the recurrent numerical sequence defined by: $a_0 = 0$, $a_n = \frac{n}{2} a_{n - 1} + (n - 1)!$ ($\forall n \geq 1$). Although, it is immediate that ${(a_n)}_n$ is constituted of rational numbers with denominators powers of $2$, it is not trivial that ${(a_n)}_n$ is actually an integer sequence. In this note, we prove this fact by expressing $a_n$ in terms of the Genocchi numbers and the Stirling numbers of the first kind. We derive from our main result several corollaries and we conclude with some remarks and open problems.      
\end{abstract}

\noindent\textbf{MSC 2010:} Primary 11B37, 11B73. \\
\textbf{Keywords:} Integer sequences, Recurrences, Genocchi numbers, Stirling numbers.

\section{Introduction and Notation}\label{sec1}

Throughout this note, we let $\N^*$ denote the set of positive integers. For a given prime number $p$ and a given positive integer $n$, we let $\vartheta_p(n)$ and $s_p(n)$ respectively denote the usual $p$-adic valuation of $n$ and the sum of base-$p$ digits of $n$. A well-known formula of Legendre (see e.g., \cite[Theorem 2.6.4, page 77]{moll}) states that for any prime number $p$ and any positive integer $n$, we have
\begin{equation}\label{eq12}
\vartheta_p(n!) = \frac{n - s_p(n)}{p - 1} .
\end{equation}
Next, we let $s(n , k)$ and $S(n , k)$ (with $n , k \in \N$, $n \geq k$) respectively denote the Stirling numbers of the first and second kinds, which can be defined as the integer coefficients appearing in the polynomial identities:
\begin{equation*}
\begin{split}
X (X - 1) \cdots (X - n + 1) & = \sum_{k = 0}^{n} s(n , k) X^k , \\
X^n & = \sum_{k = 0}^{n} S(n , k) X (X - 1) \cdots (X - k + 1) 
\end{split} ~~~~~~~~~~ (\forall n \in \N) .
\end{equation*}
This immediately implies the orthogonality relations:
\begin{equation}\label{eq13}
\sum_{k \leq i \leq n} s(n , i) S(i , k) = \sum_{k \leq i \leq n} S(n , i) s(i , k) = \delta_{n k} ~~~~~~~~~~ (\forall n , k \in \N , n \geq k) ,
\end{equation}
where $\delta_{n k}$ is the Kronecker delta. Among the many formulas related to the Stirling numbers, we mention the following (see e.g., \cite[§1.14, page 51]{com}): 
\begin{equation}\label{eq5}
\frac{\log^k(1 + x)}{k!} = \sum_{n = k}^{+ \infty} s(n , k) \frac{x^n}{n!} ~~~~~~~~~~ (\forall k \in \N) ,
\end{equation}
which is needed later on. We let finally $G_n$ ($n \in \N$) denote the Genocchi numbers which can be defined by their exponential generating function:
\begin{equation}\label{eq4}
\frac{2 x}{e^x + 1} = \sum_{n = 0}^{+ \infty} G_n \frac{x^n}{n!} . 
\end{equation} 
The famous Genocchi theorem \cite{gen} states that the $G_n$'s are all integers. It must be noted that both Stirling numbers and Genocchi numbers have combinatorial interpretations (see e.g., \cite{com,sta} for the Stirling numbers and \cite{dum,vie} for the Genocchi numbers). 

In mathematical literature, there are many examples of rational recurrent sequences that are actually integer sequences but whose integrality is not easy to prove. The Genocchi sequence ${(G_n)}_n$ is one of those sequences. Another famous example is the Somos sequences which are defined by a quadratic recurrence (see \cite{mal,som}). Furthermore, we find in Mathematics Olympiad several problems dealing with sequences defined by a rational recursion where it is asked to show their integrality (see e.g., \cite[§7]{dor}). In this note, we investigate a new interesting example of such sequences. We precisely consider the recurrent sequence ${(a_n)}_{n \in \N}$ defined by:
\begin{equation}\label{eq10}
\left\{\begin{array}{rcl}
a_0 & = & 0 , \\
a_n & = & \frac{n}{2} a_{n - 1} + (n - 1)! ~~~~ (\forall n \geq 1)
\end{array}
\right. .
\end{equation} 
Although ${(a_n)}_n$ is trivially constituted by rational numbers with denominators powers of $2$, it is not immediate that it actually consists only of integers. In the next section, we will prove this fact by expressing $a_n$ in terms of the Genocchi numbers and the Stirling numbers of the first kind. Then we derive some corollaries from our main result and conclude by mentioning a few remarks and open problems. 

\section{The results and the proofs}

Our main result is the following:

\begin{thm}\label{t1}
For all natural number $n$, we have
\begin{equation}\label{eq7}
a_n = (-1)^{n - 1} \sum_{k = 0}^{n} G_k s(n , k) .
\end{equation}
\end{thm}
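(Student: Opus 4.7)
My approach is via exponential generating functions. Set
\[
f(x) := \sum_{n \geq 0} a_n \frac{x^n}{n!} .
\]
The plan is to (i) derive a first-order linear ODE for $f$ from the recurrence~\eqref{eq10}, (ii) solve it in closed form, and (iii) show that the EGF of the right-hand side of~\eqref{eq7} coincides with the resulting closed form, using \eqref{eq5} and \eqref{eq4}. Then equating coefficients of $x^n/n!$ proves the theorem.

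\textbf{Step 1: the ODE.} Shifting indices, the recurrence reads $a_{n+1} = \tfrac{n+1}{2} a_n + n!$ for $n \geq 0$. Multiplying by $x^n/n!$ and summing, the left side becomes $f'(x)$, the term $n!\, x^n/n!$ sums to $1/(1-x)$, and $\sum_n (n+1) a_n x^n/n! = x f'(x) + f(x)$. This gives
\[
f'(x) \;=\; \tfrac12\bigl(x f'(x) + f(x)\bigr) + \frac{1}{1-x} ,
\]
i.e.\ $(2-x) f'(x) - f(x) = \tfrac{2}{1-x}$, which rewrites as
\[
\frac{d}{dx}\!\bigl[(2-x) f(x)\bigr] \;=\; \frac{2}{1-x} .
\]

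\textbf{Step 2: solving.} Integrating and using $f(0)=a_0=0$ to kill the constant yields
\[
f(x) \;=\; \frac{-2\log(1-x)}{2-x} .
\]

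\textbf{Step 3: matching with the Genocchi/Stirling side.} Let $g(x) := \sum_{n \geq 0} (-1)^{n-1}\bigl(\sum_{k=0}^{n} G_k s(n,k)\bigr) \frac{x^n}{n!}$. Swapping the order of summation and applying \eqref{eq5} with $x$ replaced by $-x$ (note $\sum_{n\geq k} s(n,k)\tfrac{(-x)^n}{n!} = \tfrac{\log^k(1-x)}{k!}$), one finds
\[
g(x) \;=\; -\sum_{k \geq 0} G_k \,\frac{\log^k(1-x)}{k!} .
\]
The key observation is that this is precisely the Genocchi EGF \eqref{eq4} evaluated at $y = \log(1-x)$. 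Since $e^{\log(1-x)} + 1 = 2 - x$, this evaluation yields
\[
g(x) \;=\; -\frac{2\log(1-x)}{2-x} \;=\; f(x) ,
\]
and comparing the coefficients of $x^n/n!$ delivers \eqref{eq7}.

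\textbf{Main obstacle.} The computational steps (deriving the ODE, solving it, manipulating the Stirling sum) are routine. The real content is the recognition in Step 3 that substituting $y=\log(1-x)$ into the Genocchi generating function produces exactly the closed form obtained from the ODE; this is what forces the Genocchi numbers to appear in~\eqref{eq7}. Once that substitution is spotted, the proof reduces to bookkeeping with the sign $(-1)^{n-1}$ and the identity \eqref{eq5}.
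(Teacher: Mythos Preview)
Your proof is correct and follows essentially the same route as the paper: compute the exponential generating function of $(a_n)$, recognize it as $-g\circ h$ with $g(x)=\tfrac{2x}{e^x+1}$ and $h(x)=\log(1-x)$, and expand via \eqref{eq4} and \eqref{eq5}. The only difference is in how the closed form $f(x)=\tfrac{-2\log(1-x)}{2-x}$ is obtained: the paper first proves the explicit formula $a_n=\tfrac{n!}{2^n}\sum_{k=1}^{n}\tfrac{2^k}{k}$ by telescoping (Proposition~\ref{p1}) and sums the resulting series (Corollary~\ref{coll1}), whereas you extract it directly from the recurrence via the ODE $\tfrac{d}{dx}[(2-x)f(x)]=\tfrac{2}{1-x}$; your route is slightly quicker for this theorem, while the paper's detour yields the formula of Proposition~\ref{p1} that is reused later for Corollary~\ref{coll3}.
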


To prove this theorem, we need the following intermediary results:

\begin{prop}\label{p1}
For all natural number $n$, we have
\begin{equation}\label{eq1}
a_n = \frac{n!}{2^n} \sum_{k = 1}^{n} \frac{2^k}{k} .
\end{equation}
\end{prop}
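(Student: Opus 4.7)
The plan is to recognize that the recurrence defining $(a_n)_n$ is a first-order linear recurrence with variable coefficients, and to normalize it so that it telescopes. Concretely, the ratio $\frac{n!}{2^n}$ appearing on the right-hand side of \eqref{eq1} is a strong hint: it is exactly the factor we should divide through by in order to kill the multiplicative coefficient $\frac{n}{2}$ in front of $a_{n-1}$.

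So first I would introduce the auxiliary sequence $b_n := \frac{2^n}{n!} a_n$. Multiplying the recurrence $a_n = \frac{n}{2} a_{n-1} + (n-1)!$ by $\frac{2^n}{n!}$, the coefficient in front of $a_{n-1}$ collapses and the recurrence transforms into the additive telescoping relation
\begin{equation*}
b_n = b_{n-1} + \frac{2^n}{n} \qquad (\forall n \geq 1),
\end{equation*}
with $b_0 = 0$. Summing this from $1$ to $n$ gives $b_n = \sum_{k=1}^{n} \frac{2^k}{k}$, and multiplying back by $\frac{n!}{2^n}$ yields \eqref{eq1}. Alternatively (and equivalently), one can set up a straightforward induction on $n$: the base case $n=0$ is trivial since the empty sum is $0$, and the inductive step is just a direct verification using the same manipulation.

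There is no real obstacle here; the only thing to watch is that the substitution $b_n = \frac{2^n}{n!} a_n$ (as opposed to, say, $\frac{n!}{2^n} a_n$) is the correct one, since we want to \emph{cancel} rather than amplify the factor $\frac{n}{2}$. Once this is seen, the proof is a two-line computation, and Proposition \ref{p1} follows.
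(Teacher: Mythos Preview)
Your proof is correct and is essentially identical to the paper's own argument: the paper likewise multiplies the recurrence by $\frac{2^k}{k!}$ to obtain the telescoping relation $\frac{2^k}{k!}a_k - \frac{2^{k-1}}{(k-1)!}a_{k-1} = \frac{2^k}{k}$ and then sums from $1$ to $n$. The only cosmetic difference is that you give the normalized quantity a name, $b_n$, while the paper works directly with $\frac{2^k}{k!}a_k$.
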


\begin{proof}
The formula is true for $n = 0$. Let us prove it for a given positive integer $n$. By definition, we have for any $k \in \N^*$:
$$
a_k - \frac{k}{2} a_{k - 1} = (k - 1)! .
$$
By multiplying the two sides of this last equality by $\frac{2^k}{k!}$, we get
$$
\frac{2^k}{k!} a_k - \frac{2^{k - 1}}{(k - 1)!} a_{k - 1} = \frac{2^k}{k} .
$$
Then by summing both sides of the last equality from $k = 1$ to $n$, we obtain (because the sum on the left is telescopic and $a_0 = 0$) that:
$$
\frac{2^n}{n!} a_n = \sum_{k = 1}^{n} \frac{2^k}{k} ,
$$
which gives the required formula. The proof is achieved.
\end{proof}

\begin{coll}\label{coll1}
The exponential generating function of the sequence ${(a_n)}_n$ is given by:
\begin{equation}\label{eq2}
\sum_{n = 0}^{+ \infty} a_n \frac{x^n}{n!} = \frac{- 2 \log(1 - x)}{2 - x} .
\end{equation}
\end{coll}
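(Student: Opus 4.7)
My plan is to compute the exponential generating function directly by substituting the closed form from Proposition \ref{p1} into the defining series and swapping the order of summation. Concretely, using $a_n = \frac{n!}{2^n}\sum_{k=1}^{n}\frac{2^k}{k}$ (with the empty sum yielding $a_0 = 0$), one has
\[
\sum_{n=0}^{+\infty} a_n \frac{x^n}{n!} \;=\; \sum_{n=1}^{+\infty}\left(\frac{x}{2}\right)^{n}\sum_{k=1}^{n}\frac{2^{k}}{k}.
\]
After interchanging the two summations (which is justified since every term is nonnegative for $0 < x < 2$, giving absolute convergence on a neighborhood of $0$), the inner sum becomes a geometric series.

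Next I would carry out the evaluation: the $k$-th contribution is $\frac{2^{k}}{k}\sum_{n\geq k}(x/2)^{n} = \frac{2^{k}}{k}\cdot\frac{(x/2)^{k}}{1-x/2} = \frac{1}{1-x/2}\cdot\frac{x^{k}}{k}$. Summing over $k \geq 1$ and recognizing the Taylor series $\sum_{k\geq 1} x^{k}/k = -\log(1-x)$ then yields
\[
\sum_{n=0}^{+\infty} a_n \frac{x^n}{n!} \;=\; \frac{-\log(1-x)}{1-x/2} \;=\; \frac{-2\log(1-x)}{2-x},
\]
which is the claimed identity.

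There is really no obstacle here beyond bookkeeping: Proposition \ref{p1} already does the hard work, and the remainder is a standard double-sum manipulation together with the two elementary generating functions (geometric series and the logarithm series). As a sanity check on convergence and on the exchange of sums, one may alternatively derive the same identity directly from the recurrence \eqref{eq10}: multiplying $a_n = \frac{n}{2}a_{n-1} + (n-1)!$ by $\frac{x^n}{n!}$ and summing over $n \geq 1$ yields the functional equation $f(x) = \frac{x}{2}f(x) - \log(1-x)$, whose unique solution is again $f(x) = \frac{-2\log(1-x)}{2-x}$. I would mention this briefly as a consistency remark but keep the main proof routed through Proposition \ref{p1} for cohesion with the surrounding exposition.
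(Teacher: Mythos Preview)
Your argument is correct and follows essentially the same route as the paper: substitute the closed form of Proposition~\ref{p1}, switch the order of summation, evaluate the inner geometric series, and identify $\sum_{k\geq 1} x^k/k = -\log(1-x)$. The extra justification for the interchange and the alternative derivation from the recurrence are fine additions but not present in the paper's proof.
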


\begin{proof}
Using Formula \eqref{eq1} of Proposition \ref{p1}, we have
$$
\sum_{n = 0}^{+ \infty} a_n \frac{x^n}{n!} = \sum_{n = 1}^{+ \infty} \left(\frac{1}{2^n} \sum_{k = 1}^{n} \frac{2^k}{k}\right) x^n = \sum_{k = 1}^{+ \infty} \sum_{n = k}^{+ \infty} \frac{1}{2^n} \frac{2^k}{k} x^n = \sum_{k = 1}^{+ \infty} \frac{2^k}{k} \left(\sum_{n = k}^{+ \infty} \left(\frac{x}{2}\right)^n\right) .
$$ 
But since $\sum_{n = k}^{+ \infty} \left(\frac{x}{2}\right)^n = \left(\frac{x}{2}\right)^k \frac{1}{1 - \frac{x}{2}} = \frac{x^k}{2^k} \cdot \frac{2}{2 - x}$, we get
$$
\sum_{n = 0}^{+ \infty} a_n \frac{x^n}{n!} = \frac{2}{2 - x} \sum_{k = 1}^{+ \infty} \frac{x^k}{k} = \frac{2}{2 - x} \left(- \log(1 - x)\right) , 
$$
as required. This achieves the proof.
\end{proof}

We are now ready to prove Theorem \ref{t1}.

\begin{proof}[Proof of Theorem \ref{t1}]
Let us consider the following three functions (which are analytic on the neighborhood of zero):
$$
f(x) := \frac{- 2 \log(1 - x)}{2 - x} ~,~ g(x) := \frac{2 x}{e^x + 1} ~,~ \text{ and } h(x) := \log(1 - x) .
$$
We easily check that $f = - g \circ h$. Since in addition $h(0) = 0$ then the power series expansion of $f$ about the origin can be obtained by substituting $h$ in the power series expansion of $g$ about the origin (which is given by \eqref{eq4}) and multiplying by $(-1)$. Doing so, we get
\begin{equation}\label{eq3}
f(x) = - \sum_{k = 0}^{+ \infty} G_k \frac{(h(x))^k}{k!} = - \sum_{k = 0}^{+ \infty} G_k \frac{\log^k(1 - x)}{k!} . 
\end{equation}
Further, by substituting in \eqref{eq5} $x$ by $(- x)$, we have for any $k \in \N$:
$$
\frac{\log^k(1 - x)}{k!} = \sum_{n = k}^{+ \infty} (-1)^n s(n , k) \frac{x^n}{n!} .
$$
So, by inserting this last into \eqref{eq3}, we get
\begin{align*}
f(x) & = - \sum_{k = 0}^{+ \infty} G_k \sum_{n = k}^{+ \infty} (-1)^n s(n , k) \frac{x^n}{n!} \\
& = - \sum_{n = 0}^{+ \infty} \sum_{k = 0}^{n} (-1)^n G_k s(n , k) \frac{x^n}{n!} \\
& = \sum_{n = 0}^{+ \infty} \left[(-1)^{n - 1} \sum_{k = 0}^{n} G_k s(n , k)\right] \frac{x^n}{n!} .
\end{align*}
Comparing this with Formula \eqref{eq2} of Corollary \ref{coll1}, we conclude that:
$$
a_n = (-1)^{n - 1} \sum_{k = 0}^{n} G_k s(n , k) ~~~~~~~~~~ (\forall n \in \N) ,
$$
as required.
\end{proof}

From Theorem \ref{t1}, we derive the following important corollary:

\begin{coll}\label{coll2}
The numbers $a_n$ {\rm(}$n \in \N${\rm)} are all integers.
\end{coll}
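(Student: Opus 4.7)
The plan is to deduce integrality directly from the closed form \eqref{eq7} provided by Theorem \ref{t1}. That formula expresses $a_n$ as $(-1)^{n-1}$ times a finite linear combination of Genocchi numbers, with coefficients given by the first-kind Stirling numbers $s(n,k)$. Consequently, establishing that every $a_n$ is an integer reduces to verifying that each ingredient of the combination is itself an integer.

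For the Stirling numbers $s(n,k)$, integrality is immediate from their definition, recalled in Section \ref{sec1}, as the coefficients of the polynomial $X(X-1)\cdots(X-n+1) \in \Z[X]$. For the Genocchi numbers $G_k$, integrality is the classical theorem of Genocchi \cite{gen}, also stated in Section \ref{sec1}. Since $a_n$ is then a $\Z$-linear combination (with a sign) of integers, it belongs to $\Z$, as claimed. The formal write-up needs only to cite Theorem \ref{t1}, invoke these two integrality facts, and conclude in a single line.

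I do not foresee any obstacle here: the statement is genuinely a corollary, and the only content beyond Theorem \ref{t1} is the invocation of the integrality of the two well-known sequences appearing on the right-hand side. Any real difficulty has already been absorbed into the proofs of Proposition \ref{p1} and Theorem \ref{t1}, which together supply a non-trivial closed form for the rational recursion \eqref{eq10}; once that closed form is available, the integrality question trivializes and the denominators $2^n$ coming from \eqref{eq1} disappear automatically.
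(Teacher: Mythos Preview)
Your proposal is correct and follows precisely the same approach as the paper: invoke Formula~\eqref{eq7} from Theorem~\ref{t1} and conclude from the integrality of the Genocchi numbers and the Stirling numbers of the first kind. The paper's own proof is a single sentence to this effect.
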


\begin{proof}
This immediately follows from Formula \eqref{eq7} of Theorem \ref{t1} and from the fact that the Genocchi numbers and the Stirling numbers of the first kind are all integers.
\end{proof}

The integrality of the $a_n$'s gives as a consequence a nontrivial lower bound for the $2$-adic valuation of the rational numbers $\sum_{k = 1}^{n} \frac{2^k}{k}$ ($n \geq 1$). We have the following:

\begin{coll}\label{coll3}
For any positive integer $n$, we have
\begin{equation}\label{eq6}
\vartheta_2\left(\sum_{k = 1}^{n} \frac{2^k}{k}\right) \geq s_2(n) .
\end{equation}
\end{coll}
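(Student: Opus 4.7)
The plan is straightforward: rewrite the sum using Proposition~\ref{p1}, apply the $2$-adic valuation, and invoke Legendre's formula together with the integrality statement of Corollary~\ref{coll2}.

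More precisely, Formula~\eqref{eq1} of Proposition~\ref{p1} can be rearranged as
$$
\sum_{k = 1}^{n} \frac{2^k}{k} = \frac{2^n\, a_n}{n!} ,
$$
which is an identity in $\Q$ and therefore behaves well under $\vartheta_2$. Taking $\vartheta_2$ of both sides (and using that it is additive on nonzero rationals) yields
$$
\vartheta_2\!\left(\sum_{k=1}^{n} \frac{2^k}{k}\right) = n + \vartheta_2(a_n) - \vartheta_2(n!) .
$$

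Now I would apply Legendre's formula \eqref{eq12} in the case $p=2$, giving $\vartheta_2(n!) = n - s_2(n)$. Substituting, the two copies of $n$ cancel and we obtain the clean identity
$$
\vartheta_2\!\left(\sum_{k=1}^{n} \frac{2^k}{k}\right) = s_2(n) + \vartheta_2(a_n) .
$$
The conclusion then reduces to the assertion $\vartheta_2(a_n) \geq 0$, which is exactly the content of Corollary~\ref{coll2}: since $a_n \in \Z$, its $2$-adic valuation is nonnegative. This finishes the proof.

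There is essentially no obstacle here; the only subtle point to verify is that the identity of Proposition~\ref{p1} is indeed valid for $n \geq 1$ (the case at hand) so that the division by $n!$ is well defined and the $\vartheta_2$ manipulations are legitimate. Note also that equality holds in \eqref{eq6} precisely when $a_n$ is odd, which could be remarked in passing but is not needed for the stated bound.
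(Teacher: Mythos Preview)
Your proof is correct and follows essentially the same route as the paper: combine Proposition~\ref{p1} with Legendre's formula for $p=2$ and the integrality of $a_n$ from Corollary~\ref{coll2}. The only cosmetic difference is that you first isolate the identity $\vartheta_2\bigl(\sum_{k=1}^{n} 2^k/k\bigr) = s_2(n) + \vartheta_2(a_n)$ and then invoke $\vartheta_2(a_n)\geq 0$, whereas the paper starts from $\vartheta_2(a_n)\geq 0$ and unwinds it; your extra remark that equality holds exactly when $a_n$ is odd is a pleasant bonus not stated in the paper.
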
  

\begin{proof}
Let $n$ be a fixed positive integer. Since $a_n \in \Z$ (according to Corollary \ref{coll2}) then we have $\vartheta_2(a_n) \geq 0$. But, by using Formula \eqref{eq1} of Proposition \ref{p1}, this is equivalent to:
$$
\vartheta_2(n!) - n + \vartheta_2\left(\sum_{k = 1}^{n} \frac{2^k}{k}\right) \geq 0 .
$$
Finally, using the Legendre formula \eqref{eq12} for the prime number $p = 2$, we have that $\vartheta_2(n!) = \frac{n - s_2(n)}{2 - 1} = n - s_2(n)$. By inserting this into the previous inequality, we get
$$
\vartheta_2\left(\sum_{k = 1}^{n} \frac{2^k}{k}\right) \geq s_2(n) ,
$$
as required.
\end{proof}

The next result express the Genocchi numbers in terms of the numbers $a_n$ ($n \in \N$) and the Stirling numbers of the second kind. It is derived from Theorem \ref{t1} and from the well-known inversion formula given by the following proposition:

\begin{propn}\label{p2}
Let ${(u_n)}_{n \in \N}$ and ${(v_n)}_{n \in \N}$ be two real sequences. Then the two following identities $(I)$ and $(II)$ are equivalent:
\begin{align}
u_n & = \sum_{k = 0}^{n} v_k s(n , k) ~~~~~~~~~~ (\forall n \in \N) , \tag{$I$} \\
v_n & = \sum_{k = 0}^{n} u_k S(n , k) ~~~~~~~~~~ (\forall n \in \N) . \tag{$II$}
\end{align}
\end{propn}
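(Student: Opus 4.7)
The plan is to prove $(I) \Longleftrightarrow (II)$ directly by plugging one identity into the other and invoking the orthogonality relations \eqref{eq13} between the two kinds of Stirling numbers, which is exactly the algebraic content of such an inversion.

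First I would show $(I) \Longrightarrow (II)$. Assuming $u_k = \sum_{j=0}^{k} v_j s(k,j)$ for all $k \in \N$, I substitute this into the right-hand side of $(II)$ and interchange the order of summation:
$$
\sum_{k=0}^{n} u_k S(n,k) \;=\; \sum_{k=0}^{n} S(n,k) \sum_{j=0}^{k} v_j s(k,j) \;=\; \sum_{j=0}^{n} v_j \sum_{k=j}^{n} S(n,k)\, s(k,j).
$$
By the second orthogonality identity in \eqref{eq13} (with the roles of $n,k$ renamed appropriately), the inner sum equals $\delta_{nj}$, so the whole expression collapses to $v_n$, which is $(II)$.

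The converse direction $(II) \Longrightarrow (I)$ is symmetric: assuming $v_k = \sum_{j=0}^{k} u_j S(k,j)$, I substitute into the right-hand side of $(I)$, swap the two summations, and apply the first form of \eqref{eq13}:
$$
\sum_{k=0}^{n} v_k s(n,k) \;=\; \sum_{k=0}^{n} s(n,k) \sum_{j=0}^{k} u_j S(k,j) \;=\; \sum_{j=0}^{n} u_j \sum_{k=j}^{n} s(n,k)\, S(k,j) \;=\; u_n.
$$

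Since the argument is essentially just a double sum interchange followed by an application of orthogonality, there is no real obstacle; the only point requiring a moment of care is bookkeeping of the summation ranges (ensuring that $j \le k \le n$ is respected after the swap) so that \eqref{eq13} applies with the correct index range. This is why the proposition is stated as a clean ``well-known inversion formula.''
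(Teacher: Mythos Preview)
Your proposal is correct and follows exactly the approach the paper indicates: the paper's own proof is a single line directing the reader to ``use the orthogonality relations \eqref{eq13}'' with references to \cite{com} and \cite{rio}, and what you have written is precisely the standard double-sum interchange that those relations underpin. In effect you have supplied the details the paper omits, with nothing substantively different.
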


\begin{proof}
Use the orthogonality relations \eqref{eq13} (see e.g., \cite{com} or \cite{rio} for the details).
\end{proof}

\begin{coll}\label{coll4}
For any natural number $n$, we have
\begin{equation}\label{eq8}
G_n = \sum_{k = 1}^{n} (-1)^{k - 1} a_k S(n , k) .
\end{equation}
\end{coll}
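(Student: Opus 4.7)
The plan is to obtain this identity as a direct application of the Stirling inversion formula given in Proposition \ref{p2}. First, I would massage Theorem \ref{t1} into the exact form required by identity $(I)$. Multiplying both sides of \eqref{eq7} by $(-1)^{n - 1}$ (and using $((-1)^{n - 1})^2 = 1$) gives
$$
(-1)^{n - 1} a_n = \sum_{k = 0}^{n} G_k \, s(n , k) \qquad (\forall n \in \N).
$$

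Next, I would set $u_n := (-1)^{n - 1} a_n$ and $v_n := G_n$ for every $n \in \N$. The previous display is then exactly identity $(I)$ of Proposition \ref{p2}, so the proposition's equivalence immediately yields identity $(II)$, namely
$$
G_n \;=\; \sum_{k = 0}^{n} (-1)^{k - 1} a_k \, S(n , k) \qquad (\forall n \in \N).
$$
Finally, because $a_0 = 0$, the $k = 0$ term contributes nothing (this also sidesteps the cosmetic oddity of the factor $(-1)^{-1}$ at $k = 0$), and the summation may be started from $k = 1$, giving precisely \eqref{eq8}.

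There is no real obstacle in this proof: once one notices that Theorem \ref{t1} is already written in the shape of $(I)$ after absorbing the alternating sign, Corollary \ref{coll4} is a one-line consequence of Stirling inversion. The only point requiring a brief comment is the reduction of the lower index of summation from $0$ to $1$, which is justified by $a_0 = 0$.
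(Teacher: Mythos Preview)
Your proof is correct and follows exactly the same approach as the paper: apply Proposition \ref{p2} with $u_n = (-1)^{n-1} a_n$ and $v_n = G_n$, observe that Theorem \ref{t1} gives identity $(I)$, deduce identity $(II)$, and then drop the $k=0$ term using $a_0 = 0$.
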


\begin{proof}
It suffices to apply Proposition \ref{p2} for $u_n = (-1)^{n - 1} a_n$ and $v_n = G_n$ ($\forall n \in \N$). In view of \eqref{eq7}, Identity $(I)$ holds; so $(II)$ also, that is
$$
G_n = \sum_{k = 0}^{n} (-1)^{k - 1} a_k S(n , k) ~~~~~~~~~~ (\forall n \in \N) .
$$
Since $a_0 = 0$, the required identity follows.
\end{proof}

We end this section by providing a table of the first values of the $a_n$'s:

\bigskip

\begin{table}[!h]
\centering\begin{tabular}{c|c|c|c|c|c|c|c|c|c|c|c|c|c|}
$\boldsymbol{n}$ & $0$ & $1$ & $2$ & $3$ & $4$ & $5$ & $6$ & $7$ & $8$ & $9$ & $10$ & $11$ & $12$ \\
\hline
$\boldsymbol{a_n}$ & $0$ & $1$ & $2$ & $5$ & $16$ & $64$ & $312$ & $1812$ & $12288$ & $95616$ & $840960$ & $8254080$ & $89441280$
\end{tabular} \\
\caption{The values of the $a_n$'s for $0 \leq n \leq 12$}
\end{table}

\section{Remarks and open problems}

\subsection{Remarks}

\begin{enumerate}
\item By taking into account the facts that $G_0 = 0$, $G_1 = 1$, $G_k = 0$ for $k$ odd with $k \geq 3$, $s(n , 1) = (-1)^{n - 1} (n - 1)!$ ($\forall n \geq 1$), $\sgn(G_{2 \ell}) = (-1)^{\ell}$ ($\forall \ell \in \N^*$), and $\sgn(s(n , k)) = (-1)^{n + k}$, Formula \eqref{eq7} of Theorem \ref{t1} reduces to:
\begin{equation}\label{eq9}
a_n = (n - 1)! + \sum_{1 \leq \ell \leq \frac{n}{2}} (-1)^{\ell - 1} \left\vert G_{2 \ell} \, s(n , 2 \ell)\right\vert ~~~~~~~~~~ (\forall n \in \N) .
\end{equation}
\item By using for example a summation by parts, we easily show that $\sum_{k = 1}^{n} \frac{2^k}{k} \sim_{+ \infty} \frac{2^{n + 1}}{n}$, which gives (according to proposition \ref{p1})
\begin{equation}\label{eq11}
a_n \sim_{+ \infty} 2 \cdot (n - 1)! .
\end{equation}
\end{enumerate}

\subsection{Open problems}

\begin{enumerate}
\item Find an alternative proof that ${(a_n)}_n$ is an integer sequence without use the Genocchi theorem (according to which the $G_n$'s are all integers). Such a proof will give us a new proof of the Genocchi theorem (through Formula \eqref{eq8} of Corollary \ref{coll4}).
\item Find a generalization of Corollary \ref{coll3} to other prime numbers $p$ other than $p = 2$. Notice that the generalization that might immediately come to mind:
$$
\vartheta_p\left(\sum_{k = 1}^{n} \frac{p^k}{k}\right) \geq s_p(n) 
$$
is false for $p > 2$ (take for example $n = 2$).
\item Find a combinatorial interpretation for the numbers $a_n$ ($n \in \N$). This seems possible since the different relations in which $a_n$ appears contain numbers that have all combinatorial meanings (as $n!$, $G_n$, $s(n , k)$, and $S(n , k)$). Such an interpretation will immediately show again that the $a_n$'s are integers.
\end{enumerate}

\rhead{\textcolor{OrangeRed3}{\it References}}

\end{document}